\documentclass{amsart}
\usepackage[utf8]{inputenc}
\usepackage[english]{babel}
\usepackage{multicol} %
\usepackage{csquotes}
\usepackage[dvipsnames]{xcolor}
\usepackage{indentfirst} 
\usepackage{amsmath} 
\usepackage{amsthm} 
\usepackage{amssymb} 
\usepackage{hyperref} 
\usepackage{mathalpha} 
\usepackage{biblatex} 
\usepackage{mathdots} 
\usepackage{graphicx} 
\usepackage[normalem]{ulem} 

\usepackage{wasysym}
\usepackage{xcolor}
\usepackage{soul}

\newtheorem{prop}{Proposition}
\newtheorem{theorem}{Theorem}

\newtheorem{definition}{Definition}

\newtheorem{remark}{Remark}

\definecolor{firebrick4}{rgb}{0.84,0.1,0.35}
\definecolor{lapislazuli}{rgb}{0.15, 0.38, 0.61}

\usepackage{subfiles}

\title{On Regular Higher Power Rational Diophantine Triples}
\author{Alen Andrašek}
\date{April 2026}

\begin{document}
\begin{abstract}
    A rational Diophantine $m$-tuple is a set $\{a_1,\ldots,a_m\}$ of distinct nonzero rational numbers such that $a_i a_j+1$ is a square for all $1\leq i < j\leq m$. Simi\-larly, we may ask when $a_ia_j+1$ is a $k$-th power. Here, we study the case $k=4$ and produce some non-trivial infinite families of such triples. We show that there are infinitely many triples with positive elements for $k=4$. We also briefly consider the $k=6$ (sextic) and $k=8$ (octic) cases, explaining the difficulties in extending the method to higher exponents.

    \smallskip
    \noindent \textbf{Keywords}: Higher Power Rational Diophantine Triples, Regular Diophantine Triples, Quartic Diophantine Triples
\end{abstract}

\maketitle

\section{Introduction}
A rational Diophantine $m$-tuple is a set $\{a_1,\ldots,a_m\}$ of distinct nonzero rational numbers such that $a_i a_j+1$ is a square for all $1\leq i < j\leq m$. Sets with this property were first studied by Diophantus (see, e.g. \cite{diophantusOriginal}) who found some particular examples such as 
\begin{align*}
    \left\{\frac{1}{16}, \frac{33}{16}, \frac{17}{4}, \frac{105}{16}\right\}.
\end{align*} Fermat found the integer quadruple $\{1,3,8,120\}$ and Euler extended it to a rational quintuple 
\begin{align*}
    \left\{1,3,8,120, \frac{777480}{8288641}\right\}.
\end{align*}
Stoll \cite{Stoll_2019} proved that this extension was unique. Dujella \cite{dujella2004there} proved that the size $m$ of an integer Diophantine $m$-tuple is at most $5$ and it was proved that an integer Diophantine quintuple does not exist (see \cite{he2018diophantinequintuple}). On the other hand, there exist one-parameter infinite families for rational Diophantine sextuples (see \cite{dujella2024diophantine}, \cite{dujella2017more}, \cite{dujella2017there}). No rational Diophantine septuple is known and no upper bounds on the size $m$ of a rational Diophantine $m$-tuple have been given. For more background on Diophantine $m$-tuples, see \cite{dujella2024diophantine}.

A $k$-th power rational Diophantine $m$-tuple is a set $\{a_1,\ldots,a_m\}$ of distinct nonzero rationals such that $a_ia_j+1 = t_{ij}^k$ for some rational $t_{ij}$. Dujella and Bugeaud \cite{bugeaud2003problem} give the following examples: in the case $k=3$, an integer triple is given by
\begin{align*}
    \{2,171,25326\}
\end{align*}
and in the case $k=4$ by
\begin{align*}
    \{1352, 9539880, 9768370\}.
\end{align*}

Batta, Szikszai and Tengely showed in \cite{batta2025higher} that there exists a rational $k$-th power Diophantine triple for any positive integer $k$ by giving explicit parametric families. For \emph{even} $k$, the set
\begin{align} \label{BST_parametrization}
\left\{    -\frac{1}{r^{\frac{k}{2}}},\frac{r^k-1}{r^{\frac{k}{2}}}, r^\frac{k}{2} \right\}
\end{align}
is a Diophantine triple over $\mathbb{Q}(r)$. Similarly, they also give parametric families for \emph{odd} $k$. In the case $k=3$, they show that every cubic Diophantine pair except $\{-1,1\}$ and $\{-3,3\}$ can be extended to a Diophantine triple (this was also shown by Byeon and Fuchs \cite{byeon2026cube}). They also exhibited a parametric family of third power Diophantine quadruples. On the other hand, Dujella and Bugeaud in \cite{bugeaud2003problem} showed that the size $m$ of a $k$-th power \emph{integer} $m$-tuple is bounded for any $k$ but again no bound is known in the rational case.

In this work, we will mainly study the case $k=4$ (which we will refer to as the \emph{quartic} case). We introduce the notion of a \emph{regular} higher power triple (Definition \ref{def_regularity}) in complete analogy with the classical definition of regularity (see \cite{dujella2024diophantine}). We show that regular quartic triples correspond to points on a surface
\begin{align*}
    \frac{s^2r^2-1}{s^2-r^2}=t^2,
\end{align*}
which we interpret as an elliptic curve with a parameter. By finding parametric points of infinite order, we find some parametric families of triples.

This is done in a couple of ways. In all cases, the points are constructed by requiring an additional condition on $r,s$. From these we find rational parametrizations for $r,s$. To the author's knowledge, no parametric solutions were known, except for $(\ref{BST_parametrization})$. Using the new expressions, we show that there are infinitely many triples with all elements positive (the above parametric family (\ref{BST_parametrization}) always has one negative element). 

Finally, we comment on ideas of how to generalize these ideas to $2k$-th powers, in particular \emph{sextic} (i.e. $6$-th power) triples. This leads to the system 
\begin{align*}
    X^k+Y^k&=Z^k+W^k, \\ 
    XYZW&=\square,
\end{align*}
where $X,Y,Z,W$ are rational numbers.
We use the notation $\square$ to denote an unspecified rational square.

\section{Quartic Regular Triples}
First, let us fix some notation with the following definition. The notation is in complete analogy with the usual notation for Diophantine triples.  
\begin{definition}\label{def_quartic_triple}
The triple $\{a,b,c\}$ of distinct nonzero rationals is a \emph{quartic} Diophantine triple if
\begin{align} \label{def_quartic_triple_equation}
\begin{split}
    ab+1&=r^4, \\
    ac+1&=s^4, \\
    bc+1&=t^4,
\end{split}
\end{align}
for some rational $r,s,t$.
\end{definition}

We want to find rational $a,b,c$ that satisfy such equations. Immediately, we can satisfy the first two equations by defining $b,c$ so that
\begin{align} \label{b,c_parametrization}
\begin{split}
    b &= \frac{r^4-1}{a}, \\
    c &= \frac{s^4-1}{a}.
\end{split}
\end{align}
This leaves us with the problem of determining when the third equation in (\ref{def_quartic_triple_equation}) is satisfied. To simplify things, we introduce an analogue to the concept of a \emph{regular} Diophantine triple, in which case the third equation reduces to that of an elliptic curve over $\mathbb{Q}(r)$.

\begin{definition} \label{def_regularity}
    With notation as in Definition 
 \ref{def_quartic_triple}, a quartic Diophantine triple is called \emph{regular} if $a=s^2-r^2$ (or equivalently if $c = a+b+2r^2$).
\end{definition}

To find a triple $\{a,b,c\}$ such that it is a quartic Diophantine triple, express $b,c$ as in (\ref{b,c_parametrization}) and plug into the third equation to find that,
\begin{align*}
    t^4 = bc+1&= \frac{r^4-1}{a}\frac{s^4-1}{a}+1\\
        &= \frac{(r^4-1)(s^4-1)+a^2}{a^2}.
\end{align*}
Now we can use regularity,
\begin{align*}
    t^4 = bc+1 &= \frac{(r^4-1)(s^4-1)+(s^2-r^2)^2}{(s^2-r^2)^2}\\
    &= \frac{(s^2r^2-1)^2}{(s^2-r^2)^2}.
\end{align*}
Taking square roots, we are left to determine when
\begin{align*}
    \frac{s^2r^2-1}{s^2-r^2} = \pm t^2.
\end{align*}
Without loss of generality, we may assume the plus sign, for a minus sign would just amount to switching $r$ and $s$.
\begin{prop} \label{abc_definition}
Suppose $r,s$ are rationals such that $s,r\neq \pm 1$, $s\neq \pm r$ and
\begin{align} \label{reg_quartic_eqn}
    \frac{s^2r^2-1}{s^2-r^2}&= t^2.
\end{align}
Then $(a,b,c)$ is a rational quartic Diophantine triple with $a=s^2-r^2$,\\ $b=\frac{r^4-1}{a}=t^2-r^2$, $c= \frac{s^4-1}{a} = s^2+t^2$.
\end{prop}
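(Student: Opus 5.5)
The plan is a direct verification. Everything follows from the identity $(r^4-1)(s^4-1)+(s^2-r^2)^2=(s^2r^2-1)^2$, already used in the computation preceding the proposition. First I will confirm the two alternative expressions for $b$ and $c$. Using (\ref{reg_quartic_eqn}),
\begin{align*}
t^2-r^2=\frac{s^2r^2-1-r^2s^2+r^4}{s^2-r^2}=\frac{r^4-1}{s^2-r^2}, \qquad s^2+t^2=\frac{s^4-s^2r^2+s^2r^2-1}{s^2-r^2}=\frac{s^4-1}{s^2-r^2}.
\end{align*}
So the definitions $b=\frac{r^4-1}{a}$, $c=\frac{s^4-1}{a}$ agree with $b=t^2-r^2$, $c=s^2+t^2$. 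All of these are well defined because $a=s^2-r^2\neq 0$, which holds since $s\neq\pm r$.

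Next I will check the three conditions in (\ref{def_quartic_triple_equation}). The conditions $ab+1=r^4$ and $ac+1=s^4$ hold by construction, as in (\ref{b,c_parametrization}). For the third, the regularity computation before the proposition gives
\begin{align*}
bc+1=\frac{(s^2r^2-1)^2}{(s^2-r^2)^2}.
\end{align*}
Substituting (\ref{reg_quartic_eqn}) turns this into $(t^2)^2=t^4$, which is the required fourth power.

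Finally I will address the conditions that $a,b,c$ be nonzero and distinct. Nonvanishing follows from the hypotheses: $a\neq0$ since $s\neq\pm r$, $b\neq 0$ since $r\neq\pm1$, and $c\neq0$ since $s\neq\pm1$. Distinctness is the only point not immediate from the stated hypotheses, so I expect it to be the main obstacle. From $c-b=s^2+r^2$ we get $b\neq c$ unless $r=s=0$, and $r=s=0$ is excluded since then $a=0$. The cases $a=b$ and $a=c$ amount to $(s^2-r^2)^2=r^4-1$ or $(s^2-r^2)^2=s^4-1$. These conditions cut out only finitely many curves in the $(r,s)$-plane. My approach is to note that such degenerate $(r,s)$ must be discarded, so the proposition is read with distinctness as an implicit genericity condition, or else to check directly that these coincidences are incompatible with (\ref{reg_quartic_eqn}) together with the stated exclusions.
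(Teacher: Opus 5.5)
Your verification is correct and is essentially the paper's argument. The paper obtains $b=t^2-r^2$ by rewriting the hypothesis as $r^4-1=(r^2-t^2)(r^2-s^2)$ and dividing by $a$. It gets $c=s^2+t^2$ from $c=a+b+2r^2$, and it relies on the computation $bc+1=(s^2r^2-1)^2/(s^2-r^2)^2$ made just before the proposition. You reach the same conclusions by direct fraction manipulation. The paper does not address nonvanishing or distinctness at all, so your extra checks go beyond it.

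The one loose end is your handling of $a\neq b$ and $a\neq c$. Reading distinctness as an ``implicit genericity condition'' would weaken the statement. Observing that the bad loci are finitely many curves does not exclude them either, since a curve can carry infinitely many rational points. In fact both coincidences are impossible under the stated hypotheses:
\begin{itemize}
\item \emph{Case $a=c$.} Since $c-a=r^2+t^2$, the equality $a=c$ forces $r=t=0$. Then $s\neq 0$, and the hypothesis reads $t^2=-1/s^2$, which is absurd.
\item \emph{Case $a=b$.} Since $ab=r^4-1$, the equality $a=b$ gives $a^2=r^4-1$. Write $r=X/Y$ in lowest terms. This yields $X^4-Y^4=(aY^2)^2$, where $X$, $Y$ and $aY^2$ are nonzero integers ($aY^2$ is an integer because its square is). This contradicts Fermat's theorem that $x^4-y^4=z^2$ has no solutions in nonzero integers.
\end{itemize}
So distinctness does follow from the hypotheses. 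The case $a=b$, however, needs this classical arithmetic input, not just the stated exclusions and elementary algebra.
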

\begin{proof}
We have already proved that this is a sufficient condition for a quartic Diophantine triple to exist. Notice that (\ref{reg_quartic_eqn}) can be rewritten as
    \begin{align*}
        r^2s^2+r^2t^2=1+s^2t^2.
    \end{align*}
We may rewrite this as
    \begin{align*}
        r^4-1&=(r^2-t^2)(r^2-s^2).
    \end{align*}
    Now $r^4-1=ab$ and $s^2-r^2=a$, so upon dividing by $a$ we get that 
    \begin{align*}
        b = t^2-r^2.
    \end{align*}
    Finally, $c=a+b+2r^2=s^2+t^2$. This proves the expressions for $b,c$.
\end{proof}
    
We shall see that there are infinitely many rational solutions to equation (\ref{reg_quartic_eqn}) and, in fact, there is even an integral solution as indicated by a computer search. Some smaller integer $r,s$ are given by
\begin{align*}
    (r,s) = (337,339), (337,3107), (507,1242).
\end{align*}
These are the only ones with $r<s<10000$. Unfortunately, the third pair does not give an integer Diophantine triple $(a,b,c)$ (the corresponding $t$ is not an integer in that case). The first two pairs give the same regular triple
\begin{align*}
    (a,b,c) = (1352, 9539880, 9768370),
\end{align*}
as already found by Dujella and Bugeaud \cite{bugeaud2003problem}. Thus, the only known integer quartic triple is in fact regular. We also note that the rational parametric triple (\ref{BST_parametrization}) is regular.

\subsection{A Special Case}
Notice that the pair $(r,s)=(337,339)$ satisfies $s=r+2$. More generally, let $s = r + \alpha$. Then (\ref{reg_quartic_eqn}) becomes
\begin{align} \label{step_back}
    \frac{r^2(r+\alpha)^2-1}{\alpha(2r+\alpha)} = t^2.
\end{align}
In the special case that $\alpha=2$ one has cancellation of polynomial factors in the above fraction. Notice that we can write
\begin{align*}
    \frac{r^2(r+\alpha)^2-1}{\alpha(2r+\alpha)} &= \frac{(r^2+\alpha r +1)(r^2+\alpha r-1)}{\alpha(2r+\alpha)} \\
    &= \frac{\left((2r+\alpha)^2 - \alpha^2 + 4\right)\left((2r+\alpha)^2 - \alpha^2 - 4\right)}{16\alpha(2r+\alpha)}.
\end{align*}
We see that the \emph{only} case when cancellation occurs automatically is for $\alpha = \pm 2$. Take $\alpha=2$ (the minus sign just amounts to switching $r$ and $s$). Then we get the elliptic curve\footnote{The minimal model of the curve is $y^2=x^3-2x$. It can also be written as $y^2=2x^4-1$. }
\begin{align} \label{alphaje2}
    t^2 = \frac{1}{4}(r+1)(r^2+2r-1).
\end{align}
This is a rank $1$ curve over $\mathbb{Q}$ and thus we get an infinite sequence of rational solutions. The generators of the Mordell-Weil group are $P=(1,1)$ of infinite order and a torsion point $T=(-1,0)$ of order $2$. Then $3P = (337,3107)$, while all other points either give trivial solutions or non-integer ones (this can be checked with a modern CAS specialized in number theory, e.g. MAGMA).

\begin{remark} \label{kompleksna_rjesenja_4}
Since we are studying fourth powers, it may be natural to extend the problem to $\mathbb{Q}(i)$. We notice that equation (\ref{alphaje2}) over $\mathbb{Q}(i)$ gives an elliptic curve of rank $2$ and by inspecting points on this curve, we found two Gaussian integer quartic Diophantine triples (up to sign change and complex conjugation)
\begin{gather*}
     (28+4i, 42+24i, 140+52i),\\
        (15-10i, -15-10i, 16i).
\end{gather*}
\end{remark}

\subsection{Weierstrass form of the main equation}
Returning to (\ref{step_back}), consider $r$ as a parameter. We may multiply this by the square of the denominator to get
\begin{align*}
    (\alpha r + r^2-1)(\alpha r+r^2+1)\alpha (2r+\alpha) &= (\alpha(2r+\alpha)t)^2 \quad \Big/\cdot r^2\\
    (\alpha r + r^2-1)(\alpha r+r^2+1)r\alpha(r\alpha+2r^2) &=(r\alpha(2r+\alpha)t)^2.
\end{align*}
This is an elliptic curve in $\alpha$. Substitute 
\footnote{We could also substitute $x=\alpha r + r^2$ to get the very nice form $y^2=(x^2-1)(x^2-r^4)$.}
\begin{align*}
y &= r\alpha(2r+\alpha)t, \\
x &= \alpha r,
\end{align*}
to get
\begin{align}
    y^2 = (x+r^2-1)(x+r^2+1)x(x+2r^2)
\end{align}
which we then divide by $x^4$.
Set $y/x^2 = v$ and $\frac{1}{x} = u$ to get
\begin{align*}
    v^2 = (u(r^2-1)+1)(u(r^2+1)+1)(2r^2u+1). 
\end{align*}
Furthermore, set new variables $y=2(r^4-1)r^2v$ and $x = 2r^2(r^4-1) u$ to obtain the Weierstrass form,
\begin{align*}
    y^2 = (x+2r^2(r^2+1))(x^2+2r^2(r^2-1))(x+r^4-1)
\end{align*}
One final translation $x \mapsto x-2r^4$ produces
    \footnote{Other equations for the curve are $y^2 = (x+2(r^2+1))(x-2(r^2-1))(x-(r^2-1)(r^2+1))$.
    There is also the form $y^2 = ((r^2-1)x+2)((r^2+1)x-2)(x-1)$ as well as the elegant
    $y^2 =x(x+(r^2-1)^2)(x+(r^2+1)^2)$.}
\begin{align} \label{Weierstrass}
     E_r  :   y^2 = (x+2r^2)(x-2r^2)(x-r^4-1).
\end{align}
The transformation formula in total is given by
\begin{align} \label{transformation_formula}
   \alpha = \frac{2r(r^4-1)}{x-2r^4}
\end{align}
and then
\begin{align*}
    s = \frac{r(x-2)}{x-2r^4}
\end{align*}
where $x$ comes from (\ref{Weierstrass}).
For example, for $r=337$, one can find a point on (\ref{Weierstrass}) with $x=4372394120642$ such that $\alpha = 2$ and then $s=339$. We tried searching iteratively for $r$ with $E_r(\mathbb{Q})$ of high rank in hopes of more likely finding integral points, but the complexity of rank calculation makes such a search very prohibitive. We used PARI/GP for this and most of the time it could not find generators using the default settings. However, some exceptions lead us to find the third parametric solution presented in this paper.

One can check that adding the torsion points of order $2$ either changes the sign of $s$ or reciprocates it, while adding the order $4$ ones switches the corresponding $s$ and $t$ in (\ref{reg_quartic_eqn}).
The curve has rank $0$ over $\mathbb{Q}(r)$ which also prevents us from finding solutions systematically.

\begin{prop}
    The elliptic curve $E_r$ in (\ref{Weierstrass}) has rank $0$ over $\mathbb{Q}(r)$ and torsion group $\mathbb{Z}/2\mathbb{Z}\times\mathbb{Z}/4\mathbb{Z}$, given by
\begin{gather*}\label{torsion}
    E(\mathbb{Q}(r))_{tors} = \{\mathcal{O}, (-2r^2,0),(2r^2,0),(r^4+1,0),\\
    (2r^4, \pm 2r^2(r^4-1)), (2, \pm2(r^4-1))\}.
\end{gather*}
\end{prop}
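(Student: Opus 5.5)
The plan has two parts. First, verify that the eight listed points lie in $E_r(\mathbb{Q}(r))$ and form a group $\mathbb{Z}/2\mathbb{Z}\times\mathbb{Z}/4\mathbb{Z}$. Second, use a specialization argument to show that $E_r(\mathbb{Q}(r))$ has no further points, neither of infinite order nor additional torsion.

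\emph{The torsion points.} The three points with $y=0$ are the $2$-torsion points $(e_i,0)$, where $e_1=-2r^2$, $e_2=2r^2$, $e_3=r^4+1$. The other points satisfy (\ref{Weierstrass}) by direct substitution:
\begin{align*}
(2r^4+2r^2)(2r^4-2r^2)(r^4-1)&=4r^4(r^4-1)^2,\\
(2+2r^2)(2-2r^2)(1-r^4)&=4(r^4-1)^2.
\end{align*}
To see that these are points of order $4$, use the $2$-descent criterion for curves with full $2$-torsion: a point $(e_i,0)$ lies in $2E(\mathbb{Q}(r))$ if and only if $e_i-e_j$ is a square for both $j\neq i$. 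For $i=3$ we have $e_3-e_1=(r^2+1)^2$ and $e_3-e_2=(r^2-1)^2$, both squares. So $(r^4+1,0)$ has exactly four halves, and these are the four listed points $(2r^4,\pm 2r^2(r^4-1))$ and $(2,\pm2(r^4-1))$. Hence these four points have order $4$ and generate, together with the other $2$-torsion points, a subgroup isomorphic to $\mathbb{Z}/2\mathbb{Z}\times\mathbb{Z}/4\mathbb{Z}$.

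\emph{Rank $0$ and no further torsion.} Here I would use the Gusić--Tadić injectivity criterion for specialization of elliptic curves $y^2=(x-e_1)(x-e_2)(x-e_3)$ with $e_i\in\mathbb{Z}[r]$. It says: if $r_0\in\mathbb{Q}$ is such that, for every nonconstant squarefree divisor $h\in\mathbb{Z}[r]$ of $(e_1-e_2)(e_1-e_3)(e_2-e_3)$, the value $h(r_0)$ is not a rational square, then the specialization map $E_r(\mathbb{Q}(r))\to E_{r_0}(\mathbb{Q})$ is injective. In our case the product of differences is, up to sign, $4r^2(r^2-1)^2(r^2+1)^2$. The relevant divisors $h$ are therefore $\pm 2^{\epsilon}$ times nonconstant products of $r$, $r-1$, $r+1$ and $r^2+1$, which is a finite list of at most a few dozen polynomials. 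I would take small integers $r_0=2,3,4,\dots$ and check each candidate against this finite list.

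For a $r_0$ that passes, I would compute $E_{r_0}(\mathbb{Q})$ in MAGMA or PARI/GP using $2$-descent. The goal is rank $0$ and torsion exactly $\mathbb{Z}/2\mathbb{Z}\times\mathbb{Z}/4\mathbb{Z}$. For torsion, by Mazur's theorem the only larger possibility is $\mathbb{Z}/2\mathbb{Z}\times\mathbb{Z}/8\mathbb{Z}$. This can be excluded by checking that none of the order-$4$ points is halvable, or simply by reducing modulo a couple of good primes. Injectivity then gives $\operatorname{rank}E_r(\mathbb{Q}(r))\le 0$, and $E_r(\mathbb{Q}(r))_{tors}\hookrightarrow \mathbb{Z}/2\mathbb{Z}\times\mathbb{Z}/4\mathbb{Z}$. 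Combined with the first part, this proves equality.

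\emph{Main obstacle.} The hard step is finding an $r_0$ that satisfies the non-square conditions and for which the rank of $E_{r_0}(\mathbb{Q})$ is provably $0$. The paper has already observed that rank computations for $E_r$ are often difficult. I would first try several small $r_0$ and look for one where the $2$-Selmer group already has the minimal size forced by the full $2$-torsion, so that no appeal to Sha is needed. If this fails, a fallback is to bound the rank geometrically instead. One would view $E_r$ as an elliptic surface over $\mathbb{P}^1_r$, compute its singular fibres (at $r=0,\pm1,\pm i,\infty$) and apply the Shioda--Tate formula. This is heavier, so specialization is the first choice.
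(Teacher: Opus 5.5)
Your proposal is correct and follows essentially the paper's proof. Both use injectivity of the specialization map via the Gusić--Tadić criterion, together with a computation showing that one specialization (the paper takes $r_0=11$) has rank $0$ and torsion $\mathbb{Z}/2\mathbb{Z}\times\mathbb{Z}/4\mathbb{Z}$; your explicit $2$-descent check that the four points of order $4$ are exactly the halves of $(r^4+1,0)$ simply fills in the step the paper calls ``easily checked.''
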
 
\begin{proof} 
We determine the structure of $E_r$ over the function field $\mathbb{Q}(r)$ by employing the specialization criterion of Gusić and Tadić \cite{GUSIC2015137}. For the specialization value $r=11$, the conditions of \cite[Theorem 1.1]{GUSIC2015137} are satisfied, which ensures that the specialization homomorphism $\sigma_{11} : E(\mathbb{Q}(r)) \to E_{11}(\mathbb{Q})$ is injective. A computation in PARI/GP reveals that $E_{11}(\mathbb{Q})$ has rank $0$ and a torsion group isomorphic to $\mathbb{Z}/2\mathbb{Z} \times \mathbb{Z}/4\mathbb{Z}$. It follows that $E_r$ has rank $0$ over $\mathbb{Q}(r)$, and its torsion subgroup is isomorphic to a subgroup of $\mathbb{Z}/2\mathbb{Z} \times \mathbb{Z}/4\mathbb{Z}$. It is easily checked that the elements listed above are torsion, concluding the proof. 
\end{proof}

\begin{remark}
One may consider the curve (\ref{Weierstrass}) in variables $(r,y)$ over $\mathbb{Q}(x)$. This then leads to a curve with generic rank $1$, isomorphic to (\ref{second_parametrization}) below.
\end{remark}
We now present three infinite parametric families of regular quartic triples. 

\section{First parametric family (where $a$ is a square)}
Consider the equation (\ref{reg_quartic_eqn}) for regular quartic triples.
One way to satisfy this is to require that $a = s^2-r^2$ be a rational square. This is achieved when $s= \frac{u^2+1}{u^2-1}r$. Then (\ref{reg_quartic_eqn}) reduces to $s^2r^2-1 = \square$. After plugging in $s$, we get
\begin{align} \label{s^2-r^2 equation_quartic_form}
    (u^2+1)^2r^4 - (u^2-1)^2 &= \square.
\end{align}

\begin{prop}
    The curve (\ref{s^2-r^2 equation_quartic_form}) has the Weierstrass form \begin{align} \label{s^2-r^2 equation_Weierstrass_form}
        y^2 = x^3 + 4(u^4-1)^2x.
    \end{align}
    This curve has torsion $\mathbb{Z}/2\mathbb{Z} = \{\mathcal{O}, (0,0)\}$ and a point of infinite order $P = (2(u^2+1)(u-1)^2, 4(u^2+1)^2(u-1)^2)$ over $\mathbb{Q}(u)$.\\
    A point on this curve gives a regular triple with $(r,s)=\left(\frac{-y}{2x(u^2+1)}, \frac{-y}{2x(u^2-1)}\right)$.
\end{prop}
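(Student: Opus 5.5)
The plan is to verify the claimed birational map directly, then handle torsion and the infinite-order claim by specialization, as in the earlier proposition. Write $m=u^2+1$ and $n=u^2-1$. Then (\ref{s^2-r^2 equation_quartic_form}) reads $v^2=m^2r^4-n^2$. The leading coefficient $m^2$ is a square in $\mathbb{Q}(u)$, so the quartic has the rational point at infinity $v/r^2\to m$. This lets us use the standard map for $v^2=ar^4+c$ with $a$ a square, which lands on $y^2=x^3-4acx$. Here $-4ac=4m^2n^2=4(u^4-1)^2$, which matches (\ref{s^2-r^2 equation_Weierstrass_form}).

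\textbf{The map.} I would set
\begin{align*}
x=2m(mr^2+v),\qquad y=-2mrx .
\end{align*}
The check runs as follows.
\begin{itemize}
\item By construction $y^2=4m^2r^2x^2$.
\item So $y^2=x^3+4m^2n^2x$ is equivalent to $x^2-4m^2r^2x+4m^2n^2=0$.
\item This quadratic has roots $x=2m(mr^2\pm v)$, precisely because $v^2=m^2r^4-n^2$.
\item The inverse is $r=-y/(2mx)$, which is the formula in the statement.
\item Then $s=\frac{m}{n}r=-y/(2nx)$, by the choice $s=\frac{u^2+1}{u^2-1}r$.
\end{itemize}
Proposition \ref{abc_definition} then yields the regular triple. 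We must exclude the finitely many points where $x=0$, $r=\pm1$, $s=\pm r$, or $s=\pm1$; these correspond to torsion points or isolated specializations.

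\textbf{The torsion subgroup.} For $E:y^2=x^3+Dx$ with $D=4(u^4-1)^2$, the $2$-torsion comes from roots of $x(x^2+D)$. Since $-D=-4(u^4-1)^2$ is not a square in $\mathbb{Q}(u)$, the only $2$-torsion point is $(0,0)$. A point of order $4$ would require $D=4k^4$, i.e.\ $u^4-1=\pm k^2$ in $\mathbb{Q}(u)$, which is impossible since $u^4-1$ is squarefree of degree $4$. To rule out odd-order torsion, I would specialize at a convenient $u$ and apply the criterion of Gusić and Tadić \cite{GUSIC2015137} as before. Alternatively, I would reduce modulo a couple of primes of good reduction and compare $\#E(\mathbb{F}_p)$ to bound the torsion by $2$.

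\textbf{Infinite order of $P$.} First verify that $P$ lies on $E$ by direct substitution, a routine polynomial identity. Then specialize to $u=2$: $D=900$ and $P\mapsto(10,100)$, and indeed $1000+9000=10^4$. Since specialization is a homomorphism, it suffices that $(10,100)$ has infinite order on $y^2=x^3+900x$. I would show this via Nagell--Lutz by computing $2P$, or a small multiple, and observing non-integral coordinates. I expect the main (though minor) obstacle to be this torsion and infinite-order bookkeeping: the torsion claim needs either the specialization-injectivity theorem or a careful hand argument over $\mathbb{Q}(u)$. The birational map itself is mechanical once the right choice of $x$ is made.
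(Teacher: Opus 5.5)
Your proof is correct. For the Weierstrass model it is the paper's argument in compressed form. Composing the paper's substitutions ($w=v/m$, $w=W+r^2$, $z=Wr$, $X=-2W$, $Z=4z$, $x=m^2X$, $y=m^3Z$) gives $x=2m(mr^2-v)$, $y=-2mrx$, which is your map with $v\mapsto -v$. Checking it through the quadratic $x^2-4m^2r^2x+4m^2n^2=0$ is a tidier way to see why it works and why $r=-y/(2mx)$ inverts it.

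The real difference is that the paper's proof ends there. It never justifies that the torsion is $\{\mathcal{O},(0,0)\}$ or that $P$ has infinite order, and your specialization arguments fill exactly that hole. A few adjustments are needed in that part:

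\begin{enumerate}
\item The Gusi\'c--Tadi\'c theorem in the form used for $E_r$ assumes full rational $2$-torsion, which $y^2=x^3+4(u^4-1)^2x$ does not have. You do not need it: specialization at any nonsingular fiber is injective on torsion in characteristic $0$, because the formal group is torsion-free.
\item Your $2$-primary argument together with the specialization $u=2$ then settles the torsion claim. There $y^2=x^3+900x$ has torsion $\{\mathcal{O},(0,0)\}$, since $-900$ is not a square and $900\neq 4k^4$.
\item The same fact shows immediately that $(10,100)$, hence $P$, has infinite order. If you prefer Nagell--Lutz, note that $2\cdot(10,100)=(16,-136)$ is still integral. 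You would need $4P$, whose $x$-coordinate is $(161/68)^2$.
\item The points with $r=\pm1$ over $\mathbb{Q}(u)$ are exactly $\pm P$ and $\pm P+(0,0)$, and these have infinite order. So your remark that the excluded points are torsion points or isolated specializations is inaccurate. The exclusion itself is correct, and in fact necessary: the last sentence of the statement fails for $P$ itself, as the paper observes right after its proof.
\end{enumerate}
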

\begin{proof} Rewrite (\ref{s^2-r^2 equation_quartic_form}) as
\begin{align*}
    r^4-\alpha^2 = w^2
\end{align*}
where $\alpha = \frac{u^2-1}{u^2+1}$, for some rational $u$. Put $w = W + r^2$ to get
\begin{align*}
    -\alpha^2 = W^2+2Wr^2\Big /\cdot W
\end{align*}
Now, set $z = Wr$ to arrive at the equation
\begin{align*}
    W^3+\alpha^2W = -2z^2
\end{align*}
Finally, multiplying by $8$, and setting $-2W = X$, $4z=Z$, we get the Weierstrass equation
\begin{align*}
    Z^2 =  X^3+4\alpha^2 X.
\end{align*}
Plugging in $\alpha=\frac{u^2-1}{u^2+1}$ and setting $x = (u^2+1)^2 X$ and $y = (u^2+1)^3Z$, we finally get \[y^2=x^3+4(u^4-1)^2x.\]
Combining the transformation formulas, we find that
\begin{align*}
    r = \frac{-y}{2x(u^2+1)}. 
\end{align*}
Then $s=r \frac{u^2+1}{u^2-1} = \frac{-y}{2x(u^2-1)}$. 
\end{proof}

Let us concretely analyze the triples that we obtain in this way.
Plug in the point of infinite order $x = 2(u^2+1)(u-1)^2$, to get
\begin{align*}
    r &= \frac{2  \cdot 4(u^2+1)^2(u-1)^2}{4\cdot 2(u^2+1)(u-1)^2(u^2+1)}\\
    &= 1.
\end{align*} Unfortunately, this corresponds to a triple $\{a,b,c\}$ with $ab=0$, which we exclude by definition.
Adding the torsion point $(0,0)$, we get $r=-1$ with the same issue.
On the other hand,
\begin{align*}
    2P = (4u^2, -4u(u^4+1))
\end{align*}
does correspond to a non-trivial example; it maps back to
\begin{align*}
    r = \frac{u^4+1}{2u(u^2+1)}
\end{align*} and $s=\frac{u^4+1}{2u(u^2-1)}$.
This leads to the parametric family of regular quartic Diophantine triples $(a,b,c)=\left(s^2-r^2,\frac{r^4-1}{s^2-r^2}, \frac{s^4-1}{s^2-r^2}\right)$ or explicitly,
\begin{align} \label{a,b,c parametrizacija 1}
    \begin{split}
    a &= \frac{(u^4 + 1)^2}{(u^4 - 1)^2}, \\
    b &= \frac{(u^8 - 4u^6 - 6u^4 - 4u^2 + 1)(u^8 + 4u^6 + 10u^4 + 4u^2 + 1)(u^2 - 1)^2}{16(u^4 + 1)^2(u^2 + 1)^2u^4},\\
    c &= \frac{(u^8 + 4u^6 - 6u^4 + 4u^2 + 1)(u^8 - 4u^6 + 10u^4 - 4u^2 + 1)(u^2 + 1)^2}{16(u^4 + 1)^2(u^2 -1)^2u^4},
    \end{split}
\end{align}
with $u\neq 0, \pm 1$.
We note that, with the exception of the first factor in $c$,
the octic polynomials in $b,c$ factor over $\mathbb{Q}$ into two quartic ones.
This parametric family of triples can be used to prove the following.
\begin{theorem} \label{inftly many positive triples}
    There are infinitely many quartic Diophantine triples that have all positive elements.
\end{theorem}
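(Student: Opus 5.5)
The plan is to take the explicit one-parameter family (\ref{a,b,c parametrizacija 1}) and show that all three elements are positive for every sufficiently large rational $u$, e.g.\ for all integers $u\geq 3$. Since $a$, $b$, $c$ come from a point $2P$ on (\ref{s^2-r^2 equation_Weierstrass_form}) via $r=\frac{u^4+1}{2u(u^2+1)}$ and $s=\frac{u^4+1}{2u(u^2-1)}$, Proposition \ref{abc_definition} says they form a quartic Diophantine triple. This holds provided its hypotheses are met: $r,s\neq\pm1$ and $s\neq\pm r$. The second is automatic because $s/r=\frac{u^2+1}{u^2-1}\neq\pm1$ for $u\neq0$. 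The first fails only for the finitely many $u$ solving a nonzero polynomial equation, such as $u^4+1=\pm2u(u^2+1)$. For integers $u\geq3$ one checks directly that $r,s>1$.

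For the signs, I would work through the factors in (\ref{a,b,c parametrizacija 1}). First, $a$ is a nonzero square, hence positive. In $b$, the denominator $16(u^4+1)^2(u^2+1)^2u^4$ and the factor $(u^2-1)^2$ are positive. The factor $u^8+4u^6+10u^4+4u^2+1$ has only positive coefficients in $u^2$, so it is positive. Thus $b>0$ exactly when $f(v)=v^4-4v^3-6v^2-4v+1>0$, where $v=u^2$. Similarly, $c>0$ exactly when
\[g(v)h(v)>0,\qquad g(v)=v^4+4v^3-6v^2+4v+1,\quad h(v)=v^4-4v^3+10v^2-4v+1.\]
All three quartics in $v$ are monic, so they are positive for $v$ large.

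To make this explicit, I would bound the negative terms. For $v\geq 9$ we have $4v^3+6v^2+4v\leq v^3(4+6/9+4/81)<5v^3\leq v^4$, so $f(v)>0$; the value at $v=9$ is $f(9)=3124$. Next, $g(v)\geq v^4-6v^2>0$ once $v>\sqrt6$. Finally, $h(v)\geq v^4-4v^3-4v>0$ for $v\geq5$. Hence $a,b,c>0$ for every integer $u\geq3$, and also for every rational $u\geq3$.

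It remains to ensure we get infinitely many genuinely distinct triples, each with distinct nonzero elements. Nonzero is clear since the products above are nonzero. Coincidences $a=b$, $a=c$, $b=c$ are nontrivial rational-function identities in $u$: for instance, $a\to1$ while $b,c\to\infty$ as $u\to\infty$. So they hold for only finitely many $u$, and we discard those. For infinitely many distinct triples, note that $a=\bigl(\frac{u^4+1}{u^4-1}\bigr)^2$ is strictly decreasing for $u>1$. Hence different $u\geq3$ give different values of $a$, and therefore different triples.

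The only mildly delicate step is the explicit positivity bound for $f$, which is the factor that is genuinely negative for small $u$ (e.g.\ $u=1$). Everything else is routine bookkeeping.
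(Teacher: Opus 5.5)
Your proposal is correct and follows the paper's proof: both use the family (\ref{a,b,c parametrizacija 1}) and reduce positivity to the single factor $u^8-4u^6-6u^4-4u^2+1$ of $b$. The differences are minor: the paper locates that factor's real roots $\pm\frac{1+\sqrt3\pm\sqrt{2\sqrt3}}{2}$ exactly, while you use a cruder but sufficient elementary bound valid for $u\ge 3$, and you also verify the hypotheses of Proposition \ref{abc_definition}, distinctness, and infinitude, which the paper leaves implicit; note only that $b\neq c$ is not covered by your asymptotic remark, but it follows at once from $c-b=r^2+s^2>0$.
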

\begin{proof} In the infinite family of $\{a,b,c\}$ in (\ref{a,b,c parametrizacija 1}), most of the factorization is positive (i.e. the factors are squared or one sees that all roots are non-real). The exception to this is the following factor from $b$,
\begin{align*}
    u^8 - 4u^6 - 6u^4 - 4u^2 + 1 = (u^4 + 2u^3 + 2u + 1)(u^4 - 2u^3 - 2u + 1).
\end{align*}
One can check that this polynomial has real zeros in
\begin{align*}
     \pm\frac{1+\sqrt{3}\pm\sqrt{2\sqrt{3}}}{2}.
\end{align*}
Calculating this to decimals, we see that choosing any rational $u\neq 0,\pm 1$ \emph{outside} of $$\langle -2.2967,-0.4354\rangle \cup \langle 0.4354, 2.2967\rangle$$ suffices to make all the $a,b,c$ positive.
\end{proof}

For an example, take $u=3$, to find the triple
\begin{align*}
   (a,b,c) = \left(\frac{1681}{1600},\frac{8063044}{3404025}, \frac{62349625}{8714304} \right) .
\end{align*}
\begin{remark}
    We note that if $\{a,b,c\}$ is a quartic Diophantine triple, so is $\{-a,-b,-c\}$. Thus, there are infinitely many triples with all entries negative as well. Finally, choosing $u$ so that $b$ is negative, gives infinitely many mixed signs triples.
\end{remark}


\section{Second parametric family (where $s=\alpha r$)}
In our integer solution, we had that $s^2-r^2=k\square$, for $k=2$, $s=339$, $r=337$ as mentioned above. This motivated us to look for a parametrization of the form
\begin{align*}
    s = \frac{ku^2+1}{ku^2-1}r
\end{align*}
and get a more general version of equation (\ref{s^2-r^2 equation_quartic_form}), namely,
\begin{align} \label{s^2-r^2 = k*kvadrat}
    (ku^2+1)^2r^4-(ku^2-1)^2 = k \square.
\end{align}
A rational point is found when $(r,t)=\left(1,\frac{2u}{ku^2-1}\right)$. This shows that it is an elliptic curve. Using standard transformations formulas, we can transform it to a Weierstrass equation. The details are quite more complicated compared to the case $k=1$ above, so we omit them. One will find the short Weierstrass form to be
\begin{align} \label{s^2-r^2 = k*kvadrat Weierstrass}
    y^2 = x^3+4k^2(k^2u^4-1)^2x.
\end{align}
The generic point of infinite order here is 
\begin{align*}
 P = (4k^2u^2, 4k^2u(k^2u^4+1)).
\end{align*}
Note that this was a \emph{double point} for $k$ a square, but here it is not! Transforming it back yields $r=1$, the trivial solution.
Still, we can again take the point $2P$ which transforms back to
\begin{align*}
r = \frac{(ku^2+1)^4-12k^2u^4}{(ku^2-1)^4-12k^2u^4}
\end{align*}
giving some complicated $a,b,c$ that we omit. 

It is the analogy with the previous case that prompted us to consider the substitution $s = \frac{ku^2+1}{ku^2-1}r$, but notice, in fact, that the expression $\alpha = \frac{ku^2+1}{ku^2-1}$ can be any rational number $\alpha$. Thus, this parametric solution could also have been  obtained more simply by considering the condition $s=\alpha r$. Then the Weierstrass form becomes
\begin{align} \label{second_parametrization}
    y^2 = x^3+4\alpha^2(\alpha^2-1)^2x.
\end{align}
Compare this with (\ref{s^2-r^2 = k*kvadrat Weierstrass}). The point of infinite order in terms of $\alpha$ is $P=(4\alpha^2, 4\alpha^2(\alpha^2+1))$. In terms of $\alpha$, from the point $2P$, we get that
\begin{align*}
    r = -\frac{\alpha^4 + 6\alpha^2 - 3}{3\alpha^4 - 6\alpha^2 - 1}
\end{align*}
and this proves the following.
\begin{prop}
    Define $(a,b,c)$ as in Proposition \ref{abc_definition}. For $r = -\frac{\alpha^4 + 6\alpha^2 - 3}{3\alpha^4 - 6\alpha^2 - 1}$ and $s=r\alpha$, $\alpha\neq \pm 1$ we have the following quartic Diophantine triple,
\begin{align*}
a &= \frac{(\alpha^4 + 6\alpha^2 - 3)^2 (\alpha+1)(\alpha-1)}{(3\alpha^4 - 6\alpha^2 - 1)^2}, \\
b &= \frac{-16(5\alpha^4 - 2\alpha^2 + 1)(\alpha^4 - 2\alpha^2 + 5)(\alpha^2 + 2\alpha - 1)(\alpha^2 - 2\alpha - 1)(\alpha^2 + 1)}{(3\alpha^4 - 6\alpha^2 - 1)^2 (\alpha^4 + 6\alpha^2 - 3)^2}, \\
c &= \frac{\left((\alpha^2+1)^4+16\alpha^2(\alpha^2-1)^2\right) \left((\alpha-1)^4+4\alpha^2 \right) \left((\alpha+1)^4+4\alpha^2\right)(\alpha^2 + 1)}{(3\alpha^4 - 6\alpha^2 - 1)^2 (\alpha^4 + 6\alpha^2 - 3)^2}.
\end{align*}
\end{prop}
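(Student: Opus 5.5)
The plan is to reduce everything to Proposition \ref{abc_definition}. With $s=\alpha r$, equation (\ref{reg_quartic_eqn}) reads
\[
\frac{\alpha^2r^4-1}{r^2(\alpha^2-1)}=t^2 .
\]
Equivalently, we need $(\alpha^2-1)(\alpha^2r^4-1)$ to be a rational square. So for the given $r$ it is enough to exhibit an explicit rational $w$ with
\[
(\alpha^2-1)(\alpha^2r^4-1)=w^2,
\]
and then set $t=w/(r(\alpha^2-1))$. Write $r=-N/D$ with $N=\alpha^4+6\alpha^2-3$ and $D=3\alpha^4-6\alpha^2-1$. The required identity becomes the polynomial identity
\[
(\alpha^2-1)(\alpha^2N^4-D^4)=W^2, \qquad W\in\mathbb{Z}[\alpha].
\]
I expect $\alpha^2N^4-D^4$ to factor as $(\alpha N^2-D^2)(\alpha N^2+D^2)$. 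Each factor should split further in a way that matches the factors visible in $b$. For instance,
\[
\alpha N^2-D^2=-(\alpha-1)(\ldots)
\]
should contain $(\alpha^2+2\alpha-1)(\alpha^2-2\alpha-1)(\alpha^2+1)$ times a square. This is exactly what makes $W$ a polynomial.

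Rather than verifying this blind, I would derive it from the elliptic curve (\ref{second_parametrization}). Along the lines of the first family, I would record the birational map from
\[
y^2=x^3+4\alpha^2(\alpha^2-1)^2x
\]
to the quartic $(\alpha^2-1)(\alpha^2r^4-1)=w^2$. Concretely I would take $w=W'+\text{(leading term)}\,r^2$ and $z=W'r$, as was done for (\ref{s^2-r^2 equation_quartic_form}), which should give $r$ as a rational function of $(x,y)$ of the shape $r=c\,y/x$. Next I would check that $P=(4\alpha^2,4\alpha^2(\alpha^2+1))$ lies on the curve. This is immediate, since both sides equal $16\alpha^4(\alpha^2+1)^2$ after expanding $16\alpha^6+16\alpha^4(\alpha^2-1)^2$. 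I would then compute $2P$ by the tangent formula,
\[
x(2P)=\left(\frac{x^2-A}{2y}\right)^2 \quad\text{with } A=4\alpha^2(\alpha^2-1)^2,
\]
and push it through the map. This should produce the stated $r$ together with the square root $w$ explicitly, so the identity holds by construction. It also gives $P$ infinite order generically, which is not needed for the statement itself.

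With $r,s,t$ in hand, Proposition \ref{abc_definition} gives the triple. It remains to compute the three entries and check the nondegeneracy hypotheses.
\begin{itemize}
\item $a=s^2-r^2=r^2(\alpha^2-1)$, which matches the displayed $a$ directly.
\item $b=(r^4-1)/a$: here $r^4-1=(N^2-D^2)(N^2+D^2)/D^4$, with $N-D=-2(\alpha^4-6\alpha^2+1)$ and $N+D=4(\alpha^4-1)$. I would factor these and cancel against $a$ to reach the stated product.
\item $c=(s^4-1)/a$ or $c=s^2+t^2$, handled similarly.
\end{itemize}
For the hypotheses, $r,s\neq\pm1$ and $s\neq\pm r$ hold (that is, $\alpha\neq\pm1$ and $a,b,c$ nonzero and distinct) except for finitely many $\alpha$. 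The statement implicitly excludes these, along with the poles where $N$ or $D$ vanishes.

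The main obstacle is the bookkeeping in the factorizations of $b$ and $c$. The octic factors such as $(\alpha^2+1)^4+16\alpha^2(\alpha^2-1)^2$ have to come out of degree-$16$ expressions. I would do this step with a CAS, confirming that the products match after clearing the common denominator $D^2N^2$. A hand check at a single value such as $\alpha=2$ serves as a sanity test.
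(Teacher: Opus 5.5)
The outline matches the paper's: reduce via Proposition~\ref{abc_definition} to $(\alpha^2-1)(\alpha^2r^4-1)$ being a square, get $r$ from $2P$ on (\ref{second_parametrization}), then compute $a,b,c$. Your reduction to $(\alpha^2-1)(\alpha^2N^4-D^4)=W^2$ is correct, and so is your treatment of $a$, $b$, $c$.

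\textbf{The gap.} It lies in the step you rely on to produce $W$, namely the birational map, and that step fails as written.

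The first-family substitution $w=W'+\lambda r^2$ needs $\lambda^2$ to equal the leading coefficient of the quartic. Here $w^2=\alpha^2(\alpha^2-1)r^4-(\alpha^2-1)$ has leading coefficient $\alpha^2(\alpha^2-1)$ and constant term $-(\alpha^2-1)$. Neither is a square in $\mathbb{Q}(\alpha)$.

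In fact no map over $\mathbb{Q}(\alpha)$ can give $r=c\,y/x$. The poles of $r$ are the two points at infinity of the quartic, which are interchanged by $\sqrt{\alpha^2-1}\mapsto-\sqrt{\alpha^2-1}$. By contrast, $y/x$ has its poles at the rational points $(0,0)$ and $\mathcal{O}$. A correct map must first use the rational point $(r,w)=(1,\alpha^2-1)$, for example by putting $r=1+1/\rho$, which makes the leading coefficient $(\alpha^2-1)^2$. The paper likewise omits this transformation, calling it more complicated than the first-family case.

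Two smaller errors:
\begin{itemize}
\item Your factorization heuristic is wrong. The factors $(\alpha^2\pm2\alpha-1)(\alpha^2+1)$ divide $N^4-D^4$, coming from $r^4-1$ and hence from $b$; they do not divide $\alpha^2N^4-D^4$.
\item When checking $P$, $x^3=64\alpha^6$, not $16\alpha^6$.
\end{itemize}

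\textbf{The repair.} Since the statement only needs the identity, the simplest fix is to verify it directly. With $q(\alpha)=\alpha^4-4\alpha^3-6\alpha^2-4\alpha+1$ one has
\begin{align*}
\alpha N^2-D^2&=(\alpha-1)\,q(\alpha)^2, & \alpha N^2+D^2&=(\alpha+1)\,q(-\alpha)^2.
\end{align*}
The second follows from the first because $N$ and $D$ are even in $\alpha$. Hence $W=(\alpha^2-1)\,q(\alpha)\,q(-\alpha)$ and $t=-q(\alpha)q(-\alpha)/(ND)$. Your computation of $a,b,c$ then completes the proof.

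As a check, at $\alpha=2$ this gives
\[
s^2+t^2=\frac{4\cdot 37^4+1551^2}{(37\cdot 23)^2}=\frac{9902245}{(37\cdot 23)^2},
\]
which agrees with the displayed $c$.
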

Finally, we note that another way to obtain (\ref{second_parametrization}) is to consider the curve (\ref{Weierstrass}) in $(y,r)$ with $x$ as a parameter and finding the Weierstrass form.

\section{Third parametric family (from a Pell equation)}

Returning to (\ref{Weierstrass}), a brute force search was performed with the PARI/GP function \emph{ellrank} to find integer $r$ such that a point of infinite order is found with integer coordinates. This would give better chances of the corresponding $\alpha$ to be an integer, after the transformation formula (\ref{transformation_formula}) is applied (thus finding an integer quartic Diophantine triple).

This seems to be a rarity for bigger $r$ (or at least PARI/GP cannot find such points), but in the range $[10^4,10^5]$, a couple of numbers $r$ distinguished themselves, namely $r = 10864$ and  $r = 40545$. For these $r$, PARI/GP found points that translate to $\alpha = -440480880/18817$ and $\alpha = -3067553610/35113$ respectively, which have a significantly smaller denominator compared to $r$ of similar size. We calculated the ratio with $r$ and found
\begin{align*}
    \frac{18817}{10864} = 1.73205081001\ldots \\
    \frac{35113}{40545} = 0.86602540387\ldots
\end{align*}
These seemed to be very good approximations for $\sqrt{3}$ and $\sqrt{3}/2$. Looking up the numbers in the OEIS shows that these numbers are denominators of even convergents of $\sqrt{3}$.
The reason for this turned out to be, that if $p/r$ is an even numbered convergent of $\sqrt{3}$, then as is well known, $p,r$ satisfy the Pell equation,
\begin{align} \label{sqrt3_pell}
    p^2-3r^2=1.
\end{align}
Since we were searching through integers, we found integer $r$ such that they are a solution for the Pell equation above, but any rational numbers would work as well. 
\begin{prop}
    Let $r\in\mathbb{Q}$, $r\neq 0, \pm 1$ and assume there is a $p\in\mathbb{Q}$ such that $p^2-3r^2=1$. Then $P=(r^2+1, pr(1-r^2))$ is a point of infinite order on the curve 
\begin{align*}
     E_r  :   y^2 = (x+2r^2)(x-2r^2)(x-r^4-1).
\end{align*}
\end{prop}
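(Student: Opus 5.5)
The plan has three steps: check that $P$ lies on $E_r$, use Mazur's theorem to bound the possible order of $P$, and rule out the few remaining finite orders by computing $x(2P)$.

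\emph{Step 1: $P$ lies on $E_r$.} Substitute $x=r^2+1$ into the right-hand side of $E_r$. We get $x+2r^2=3r^2+1$, $x-2r^2=1-r^2$ and $x-r^4-1=r^2(1-r^2)$. The product is therefore $(3r^2+1)\,r^2(1-r^2)^2$. By the Pell relation $p^2=3r^2+1$ this equals $\bigl(pr(1-r^2)\bigr)^2$, so $P\in E_r(\mathbb{Q})$.

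\emph{Step 2: reduce to finitely many orders.} Fix the rational $r$. By the earlier proposition, $E_r(\mathbb{Q})$ contains the eight points listed there, which form a copy of $\mathbb{Z}/2\mathbb{Z}\times\mathbb{Z}/4\mathbb{Z}$. Mazur's theorem then forces $E_r(\mathbb{Q})_{tors}$ to be either $\mathbb{Z}/2\mathbb{Z}\times\mathbb{Z}/4\mathbb{Z}$ or $\mathbb{Z}/2\mathbb{Z}\times\mathbb{Z}/8\mathbb{Z}$. So if $P$ were torsion, its order would be $1,2,4$ or $8$.

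Orders $1,2,4$ are excluded by showing $P$ is not one of the eight listed points; this is a comparison of $x$-coordinates.
\begin{itemize}
\item $r^2+1=\pm 2r^2$ forces $r^2=1$ or $r^2=-1/3$.
\item $r^2+1=r^4+1$ forces $r=0$ or $r=\pm1$.
\item $r^2+1=2$ forces $r=\pm1$.
\item $r^2+1=2r^4$ forces $r^2=1$ or $r^2=-1/2$.
\end{itemize}
All of these are excluded by $r\in\mathbb{Q}$, $r\neq0,\pm1$. Hence the only remaining possibility is that $P$ has order $8$. In that case $2P$ has order $4$, so $x(2P)\in\{2,\,2r^4\}$, since these are the $x$-coordinates of the points of order $4$.

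\emph{Step 3: compute $x(2P)$.} Use the duplication formula on $E_r$ written as $y^2=f(x)$ with $f(x)=(x^2-4r^4)(x-r^4-1)$:
\[
x(2P)=\lambda^2+(r^4+1)-2x_P,\qquad \lambda=\frac{f'(x_P)}{2y_P}.
\]
Here $f'(r^2+1)$ is an explicit polynomial in $r$, and $y_P=pr(1-r^2)$. To make $\lambda^2$ a rational function of $r$ alone, replace $p^2$ by $3r^2+1$. Then set $x(2P)=2$ and $x(2P)=2r^4$ separately, clear denominators, and obtain two polynomial equations in $r$. The claim to verify is that neither has a rational root other than $r=0,\pm1$. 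If that holds, $P$ has infinite order.

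\emph{Main obstacle.} The hard part is this final root check: the polynomials have fairly high degree, and they may not factor cleanly.

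If they do not, I will use the halving criterion instead. A point $Q$ of order $4$ lies in $2E_r(\mathbb{Q})$ if and only if each $x(Q)-e_i$ is a rational square, where $e_i\in\{\pm2r^2,\,r^4+1\}$ are the roots of $f$.
\begin{itemize}
\item For $Q$ with $x(Q)=2$, this requires $2(1+r^2)$ and $2(1-r^2)$ to be squares.
\item For $Q$ with $x(Q)=2r^4$, this requires $2(r^2+1)$ and $2(r^2-1)$ to be squares.
\end{itemize}
So $\mathbb{Z}/2\mathbb{Z}\times\mathbb{Z}/8\mathbb{Z}$ torsion requires $2(1+r^2)$ and $\pm2(1-r^2)$ to be squares. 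Together with $3r^2+1=p^2$, this defines a curve of genus $\ge1$. I would then show by a rank-zero or descent computation, for instance in MAGMA, that its only rational points have $r\in\{0,\pm1\}$.
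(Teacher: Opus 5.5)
Your proposal is correct, and it is more complete than the paper's argument. The paper's proof is your Step 1 followed by the single remark that $P$ is not one of the eight listed torsion points when $r\neq 0,\pm1$. It thereby tacitly assumes that these eight points are all of $E_r(\mathbb{Q})_{\mathrm{tors}}$. However, the torsion proposition was only proved over $\mathbb{Q}(r)$, and $P$ is not even a $\mathbb{Q}(r)$-point, since $p=\sqrt{3r^2+1}$. For an individual rational $r$ the torsion could a priori be $\mathbb{Z}/2\mathbb{Z}\times\mathbb{Z}/8\mathbb{Z}$. Your Mazur step, together with the exclusion of order $8$, is exactly what closes this gap.

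The step you call the main obstacle is short. Take $e_{1,2}=\pm 2r^2$ and $e_3=r^4+1$, and use the identity $x(2P)-e_3=\bigl((x_P-e_3)^2-(e_3-e_1)(e_3-e_2)\bigr)^2/(4y_P^2)$. This gives
\[
x(2P)-(r^4+1)=\frac{(1-r^2)^2(2r^2+1)^2}{4r^2(3r^2+1)}.
\]
The case $x(2P)=2$ means this equals $1-r^4$, and the case $x(2P)=2r^4$ means it equals $r^4-1$. Cancelling $1-r^2$ and writing $w=r^2$ turns these into $16w^3+16w^2+w-1=0$ and $8w^3+16w^2+7w+1=0$ respectively. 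The first has no rational root by the rational root test, and the second has no positive root at all. So order $8$ is impossible.

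Your fallback also needs no machine computation. Multiplying your two squareness conditions shows that $\pm(1-r^4)$ must be a rational square. Fermat's theorem that $X^4-Y^4=Z^2$ has no solutions in nonzero integers then forces $r\in\{0,\pm1\}$. This route even shows that $E_r(\mathbb{Q})_{\mathrm{tors}}\cong\mathbb{Z}/2\mathbb{Z}\times\mathbb{Z}/4\mathbb{Z}$ for every rational $r\neq0,\pm1$, independently of the Pell condition. That is precisely the fact the paper's one-line argument implicitly relies on.
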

\begin{proof} Simply plug in $x=r^2+1$,
\begin{align*}
    y^2 &= (r^2+1+2r^2)(r^2+1-2r^2)(r^2+1-r^4-1) \\
    &=(3r^2+1)(1-r^2)(r^2-r^4) \\
    &=(3r^2+1)r^2(1-r^2)^2\\
    &= p^2r^2(1-r^2)^2.
\end{align*} 
We see that $(r^2+1, pr(1-r^2))$ is a point on the curve $E_r(\mathbb{Q})$ and it is also not one of the eight torsion points in (\ref{torsion}) (except for $r=0,\pm 1)$. \end{proof}

\begin{remark}
It is interesting from a computational point of view that for $r=2911$, the PARI/GP function \emph{ellrank} did not find the generator with default settings, even though this $r$ also solves the Pell equation (\ref{sqrt3_pell}) above, meaning there must be a point of infinite order. So we see that if we cannot find small examples, perhaps we may have more luck with larger ones. With a different approach, e.g. using \emph{mwrank}, one may determine the rank computationally.
\end{remark}

With the previous proposition, we can construct a point of infinite order when $r$ satisfies (\ref{sqrt3_pell}). Parametrize this quadratic equation to find
\begin{align*}
    r &= \frac{2u}{3-u^2}, \\
    p &= \frac{3+u^2}{3-u^2},
\end{align*}
for some rational $u$. Then using (\ref{transformation_formula}), we will get a corresponding $\alpha$ and $s$ and thus a quartic Diophantine triple $(a,b,c)$. 
We can combine the above point of infinite order with any of the eight torsion points. These eight points give the following possible $s$,
\begin{align*}
    \pm \frac{u^4-9}{8u^2}, \pm \frac{8u^2}{u^4-9},\pm\frac{2u(u^2-3)}{u^4+2u^2+9},\pm\frac{u^4+2u^2+9}{2u(u^2-3)}.
\end{align*}
For each of these possible $s$, the corresponding $t$ from (\ref{reg_quartic_eqn}) will also be one of these expressions. Due to symmetry, switching $s,t$ or changing signs gives the same parametric family of triples. Thus, only two of them are actually distinct, with $s$ reciprocal in the second one. Defining $(a,b,c)$ as in Proposition \ref{abc_definition}, we have the following two propositions.
\begin{prop}\label{sqrt_3 parametrizacija 1}
Let $(r,s)=\left(\frac{2u}{3-u^2}, \frac{8u^2}{u^4-9}\right)$. Then
\begin{align*}
\begin{split}
    a &=  -\frac{4(u^2-9)(u^2 - 1)u^2}{(u^4 - 9)^2}, \\
b &=  \frac{(u^4 - 2u^2 + 9)(u^2 + 3)^2}{4(u^2 - 3)^2u^2},\\
c &=  \frac{(u^8 + 46u^4 + 81)(u^2 + 9)(u^2 + 1)}{4(u^4 - 9)^2u^2}
\end{split}
\end{align*}
is a quartic Diophantine triple if $u\neq 0, \pm 1, \pm 3$.
\end{prop}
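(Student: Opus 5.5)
Since Proposition \ref{abc_definition} already reduces everything to the single relation (\ref{reg_quartic_eqn}), the plan is to verify that relation for the given $(r,s)$, exhibit an explicit rational $t$, and then check that the three displayed expressions agree with $a=s^2-r^2$, $b=t^2-r^2$, $c=s^2+t^2$.

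First compute $a=s^2-r^2$ directly. Since $3-u^2=-(u^2-3)$ and $u^4-9=(u^2-3)(u^2+3)$, we have $r^2=\frac{4u^2(u^2+3)^2}{(u^4-9)^2}$, so
\begin{align*}
a=\frac{64u^4-4u^2(u^2+3)^2}{(u^4-9)^2}=\frac{4u^2\bigl(16u^2-(u^2+3)^2\bigr)}{(u^4-9)^2}=-\frac{4u^2(u^2-9)(u^2-1)}{(u^4-9)^2},
\end{align*}
using $16u^2-(u^2+3)^2=-(u^2-1)(u^2-9)$. This matches the stated $a$.

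Next comes the key step, producing $t$. By the discussion preceding the proposition, $t$ should be one of the eight listed expressions. The natural guess is $t=\pm\frac{u^4+2u^2+9}{2u(u^2-3)}$, since the claimed $b$ has denominator $4(u^2-3)^2u^2$. I would verify $b=t^2-r^2$:
\begin{align*}
t^2-r^2=\frac{(u^4+2u^2+9)^2}{4u^2(u^2-3)^2}-\frac{4u^2}{(u^2-3)^2}=\frac{(u^4+2u^2+9)^2-16u^4}{4u^2(u^2-3)^2}.
\end{align*}
The numerator factors as a difference of squares, $(u^4+6u^2+9)(u^4-2u^2+9)=(u^2+3)^2(u^4-2u^2+9)$, which gives the stated $b$. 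To confirm (\ref{reg_quartic_eqn}) itself, I would check the equivalent form $r^4-1=(r^2-t^2)(r^2-s^2)=ab$ from the proof of Proposition \ref{abc_definition}. This is a polynomial identity in $u$. Its verification is the main, purely computational obstacle: expand $r^4-1=\frac{16u^4-(u^2-3)^4}{(u^2-3)^4}$ and compare it with the product of the two displayed fractions, $-\frac{(u^2-9)(u^2-1)(u^4-2u^2+9)}{(u^2-3)^4}$. Since $16u^4-(u^2-3)^4=(4u^2-(u^2-3)^2)(4u^2+(u^2-3)^2)$, with $4u^2-(u^2-3)^2=-(u^2-1)(u^2-9)$ and $4u^2+(u^2-3)^2=u^4-2u^2+9$, the identity follows.

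Finally, set $c=s^2+t^2=a+b+2r^2$ and combine over the denominator $4u^2(u^4-9)^2$. The numerator should factor as $(u^8+46u^4+81)(u^2+9)(u^2+1)$, and I would check this by expansion or by comparing it with $(s^4-1)/a$. It remains to verify nondegeneracy. For $u\neq0,\pm1,\pm3$ (and $u^2\neq3$, which is automatic over $\mathbb{Q}$), the values $r,s$ are defined, $a\neq0$, and $r\neq\pm1$, since $r=\pm1$ would force $u^2\pm2u-3=0$, that is $u=\pm1,\pm3$. Also $s\neq\pm r$ because $a\neq0$, and $b,c\neq0$ since their factors have no rational roots. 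Distinctness of $a,b,c$ holds away from finitely many $u$, so the statement should be read with that caveat. Proposition \ref{abc_definition} then gives the conclusion.
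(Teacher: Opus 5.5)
Your proof is correct, but it takes a different route from the paper. The paper never checks the identities directly. It gets $r=\frac{2u}{3-u^2}$ from the rational parametrization of the Pell conic $p^2-3r^2=1$ and uses the point $(r^2+1,\,pr(1-r^2))$ on $E_r$. It translates this point by the eight torsion points, reads off $s$ through $s=\frac{r(x-2)}{x-2r^4}$, and then applies Proposition \ref{abc_definition}.

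You bypass $E_r$ entirely. You exhibit $t=\frac{u^4+2u^2+9}{2u(u^2-3)}$ and reduce everything to two difference-of-squares factorizations: $16u^2-(u^2+3)^2=-(u^2-1)(u^2-9)$ and $16u^4-(u^2-3)^4=-(u^2-1)(u^2-9)(u^4-2u^2+9)$. Both are right. So is the expansion you left for $c$: with $w=u^2$, both $256w^3+(w^3+5w^2+15w+27)^2$ and $(w^4+46w^2+81)(w^2+10w+9)$ equal $w^6+10w^5+55w^4+460w^3+495w^2+810w+729$.

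Your route is self-contained and hand-checkable. It does not rely on the chain of Weierstrass transformations or on the list of torsion translates, which the paper states without computation. The paper's route explains where the family comes from, shows it arises from a point of infinite order, and yields the companion family of Proposition \ref{sqrt_3 parametrizacija 2} at the same time.

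One correction: your caveat on distinctness is unnecessary, and the statement holds for every $u\neq 0,\pm1,\pm3$.
\begin{itemize}
\item Since $c-b=r^2+s^2$ and $c-a=r^2+t^2$ with $r\neq 0$, the only case to exclude is $a=b$, which means $s=\pm t$.
\item Clearing denominators and cancelling $u^2-3$, the condition $s=\pm t$ becomes $16u^3=\pm(u^6+5u^4+15u^2+27)$.
\item This is impossible for real $u\neq 0$. By AM--GM, $u^6+27\geq 6\sqrt3\,|u|^3$ and $5u^4+15u^2\geq 10\sqrt3\,|u|^3$, so the right side has absolute value at least $16\sqrt3\,|u|^3>16|u|^3$.
\end{itemize}
The paper's Proposition \ref{abc_definition} skips this distinctness check as well, so adding these lines makes your argument more complete than the source.
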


\begin{prop} \label{sqrt_3 parametrizacija 2}
Let $(r,s)=\left(\frac{2u}{3-u^2}, \frac{u^4-9}{8u^2}\right)$. Then
\begin{align*}
\begin{split}
   a &=  \frac{(u^4+2u^2+9)^2(u^2 - 1)(u^2 - 9)}{64(u^2 - 3)^2u^4}, \\
b &=  -\frac{64(u^4 - 2u^2 + 9)u^4}{(u^4 + 2u^2 + 9)^2(u^2 - 3)^2},\\
c &=  \frac{(u^8 + 46u^4 + 81)(u^2 + 9)(u^2 + 1)(u^2 - 3)^2}{64(u^4 + 2u^2 + 9)^2u^4}
\end{split}
\end{align*}
is a quartic Diophantine triple if $u\neq 0, \pm 1, \pm 3$.
\end{prop}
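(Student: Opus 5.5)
The plan is to reduce everything to Proposition \ref{abc_definition}. It suffices to check three things for $(r,s)=\left(\frac{2u}{3-u^2},\frac{u^4-9}{8u^2}\right)$: the pair satisfies equation (\ref{reg_quartic_eqn}) with a rational $t$; the nondegeneracy conditions $r,s\neq\pm1$, $s\neq\pm r$ hold for the allowed $u$; and the resulting $a,b,c$ equal the stated closed forms and are distinct and nonzero.

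First I compute $s^2r^2-1$ and $s^2-r^2$. We have $rs=\frac{2u(u^4-9)}{8u^2(3-u^2)}=-\frac{u^2+3}{4u}$, so
\[ r^2s^2-1=\frac{(u^2+3)^2-16u^2}{16u^2}=\frac{(u^2-1)(u^2-9)}{16u^2}. \]
For $s^2-r^2$ I factor $s^2-r^2=(s-r)(s+r)$ over the common denominator $8u^2(u^2-3)$. This gives
\[ s\pm r=\frac{(u^4-9)(u^2-3)\mp 16u^3}{8u^2(u^2-3)}. \]
The numerators should factor as $(u^2\mp 2u-3)(u^4\pm 2u^3+2u^2\pm 6u+\dots)$-type products, sharing the factors $(u^2-1)(u^2-9)$ with $r^2s^2-1$. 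I expect
\[ s^2-r^2=\frac{(u^2-1)(u^2-9)(u^4+2u^2+9)^2}{64u^4(u^2-3)^2}, \]
which agrees with the stated $a$. Then the quotient is
\[ t^2=\frac{4u^2(u^2-3)^2}{(u^4+2u^2+9)^2}, \]
so $t=\frac{2u(u^2-3)}{u^4+2u^2+9}$ is rational. This matches the list of eight torsion-shifted values, as the discussion preceding the statement predicts.

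Next I derive $b$ and $c$. Proposition \ref{abc_definition} gives $b=t^2-r^2$ and $c=s^2+t^2$, and computing these is easier than dividing $r^4-1$ by $a$. For $b$, the difference $t^2-r^2=(t-r)(t+r)$ over the denominator $(u^4+2u^2+9)^2(u^2-3)^2$ should collapse to $-64u^4(u^4-2u^2+9)$. For $c$, the sum $s^2+t^2$ over the common denominator $64u^4(u^4+2u^2+9)^2$ should produce the product $(u^8+46u^4+81)(u^2+9)(u^2+1)(u^2-3)^2$.

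The main obstacle is this polynomial bookkeeping: confirming the factorizations of the degree-8 and degree-12 numerators. I would do it by direct expansion, or by checking at enough specializations of $u$, since both sides are rational functions of bounded degree.

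Finally I handle degeneracies. The value $a$ vanishes exactly when $u=\pm1,\pm3$. The expressions are undefined when $u=0$ or $u^2=3$, and the latter is impossible over $\mathbb{Q}$. The factors $u^4\pm2u^2+9$ and $u^8+46u^4+81$ have no rational roots, so $b,c\neq0$. The excluded values also guarantee $r,s\neq\pm1$ and $s\neq\pm r$, since these conditions are equivalent to $a,b,c\neq 0$ via $ab=r^4-1$ and $ac=s^4-1$. Distinctness of $a,b,c$ follows because the sign and degree patterns show they are distinct rational functions; any finitely many coincidences at specific rational $u$ would need to be excluded or checked separately, and I expect there are none beyond those listed.
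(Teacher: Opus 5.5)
Your proposal is correct, but it follows a different route from the paper. The paper never checks (\ref{reg_quartic_eqn}) by hand. It takes the point $(r^2+1,pr(1-r^2))$ on $E_r$, which exists because $r=\frac{2u}{3-u^2}$ parametrizes the Pell conic $p^2-3r^2=1$. It then adds a torsion point and pulls back through (\ref{transformation_formula}), so $s$ satisfies (\ref{reg_quartic_eqn}) by construction, and Proposition \ref{abc_definition} supplies $a,b,c$.

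You verify the identity directly instead. This is more elementary and self-contained: it does not depend on the chain of birational maps to $E_r$, which the paper only sketches, and it produces the explicit $t=\frac{2u(u^2-3)}{u^4+2u^2+9}$. What the paper's route buys is an explanation of where this $s$ comes from, and why the eight torsion translates give only two genuinely different families.

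The targets you say you "expect" are all correct.
\begin{itemize}
\item The numerators of $s+r$ and $s-r$ are $(u-1)(u-3)(u^2+2u+3)^2$ and $(u+1)(u+3)(u^2-2u+3)^2$. This is not the factor shape you guessed. Since $(u^2+2u+3)(u^2-2u+3)=u^4+2u^2+9$, their product gives the stated $a$.
\item For $b=t^2-r^2$, use $(u^2-3)^4-(u^4+2u^2+9)^2=-16u^2(u^4-2u^2+9)$.
\item For $c=s^2+t^2$, first pull out $(u^2-3)^2$. With $w=u^2$ the remaining identity is $(w+3)^2(w^2+2w+9)^2+256w^3=(w^4+46w^2+81)(w+1)(w+9)$.
\end{itemize}

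The one step you leave open, distinctness, closes in a few lines. Since $r\neq 0$, we have $c-a=r^2+t^2>0$ and $c-b=r^2+s^2>0$. Also $a-b=s^2-t^2$, and $|s|>|t|$ is equivalent to $(u^2+3)(u^4+2u^2+9)>16|u|^3$. That inequality follows from $u^2+3\ge 2\sqrt3\,|u|$ and $u^4+2u^2+9\ge 8u^2$. Hence $b<a<c$ for every admissible $u$, which gives distinctness; the paper does not address it explicitly either.
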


For brevity, the following two factorizations were not included above
\begin{align*}
   u^4+2u^2+9 &=  (u^2+2u+3)(u^2-2u+3), \\
   u^8 + 46u^4 + 81 &= (u^4 + 2u^3 + 2u^2 - 6u + 9)(u^4 - 2u^3 + 2u^2 + 6u + 9).
\end{align*}
We note that the parametric families given in the previous propositions indeed can be distinguished. For example, the parametric family in Proposition \ref{sqrt_3 parametrizacija 2} has $b$ always negative. Thus, it cannot prove Theorem \ref{inftly many positive triples}. On the other hand, Proposition \ref{sqrt_3 parametrizacija 1} can do so completely analogously to the proof of Theorem \ref{inftly many positive triples}. (Indeed, there $b,c$ have all factors positive and one simply chooses $u$ such that the factor $-(u^2-9)(u^2-1)$ of $a$ is positive.) 

\begin{remark}
    We can choose $u$ so that $r$ itself is a square, thus at least one of the right hand side numbers in (\ref{def_quartic_triple}) will be an $8$-th power as well. In fact, setting $r=\frac{2u}{3-u^2}=\square$, this equation transforms into the elliptic curve $y^2=x^3-12x$ of rank $1$ and we get an infinite sequence of such triples. This is interesting because even though \cite{batta2025higher} give a parametric family for any power, the parametrization there always has one of the $r,s,t$ equal to $0$ and here this is not the case. The generator $P$ of the Mordell-Weil gives a trivial case which we disregard. But $2P$ gives $u=-2$ and $r=4$. The corresponding triple from Proposition \ref{sqrt_3 parametrizacija 1} is
    \begin{align*}
        (a,b,c) = \left( \frac{240}{49},
        \frac{833}{16}, \frac{69745}{784}\right).
    \end{align*}
\end{remark}

\section{Generalization to Higher Power Tuples}
Our concept of \emph{regularity} can be extended to any even power Diophantine triple in the following way:
\begin{definition}
    The triple $\{a,b,c\}$ is a rational $2k$-th power Diophantine triple if
\begin{align} \label{def_even_power_triple}
\begin{split}
    ab+1&=r^{2k}, \\
    ac+1&=s^{2k}, \\
    bc+1&=t^{2k},
\end{split}
\end{align}
for some rational $r,s,t$. We will call it regular if $a=s^k-r^k$ or equivalently $c=a+b+2r^k$.
\end{definition}
\begin{remark}
    From the second expression, write $c-a-b=2r^k$. Upon squaring this, we can eliminate $r$ to get the equivalent regularity condition 
    \begin{align} \label{regularity_condition}
        a^2+b^2+c^2-2ab-2bc-2ca=4
    \end{align} which applies equally for any even power Diophantine triple.
\end{remark}
In analogy with the quartic case, let $b=\frac{r^{2k}-1}{a}$, $c=\frac{s^{2k}-1}{a}$ so the first two equations in (\ref{def_even_power_triple}) hold, and assume regularity (i.e. $a=s^k-r^k)$. The third equation will be satisfied if
\begin{align*}
    t^{2k} = bc+1 &= \frac{r^{2k}-1}{a}\frac{s^{2k}-1}{a}+1\\
    &=\left(\frac{r^ks^k-1}{s^k-r^k}\right)^2.
\end{align*}Upon taking square roots, we find that we must have
\begin{align*}
    \frac{r^ks^k-1}{s^k-r^k}=t^k,
\end{align*}
in complete analogy with equation (\ref{reg_quartic_eqn}) which we studied in depth for the quartic case. Multiply this out and rewrite to get 
\begin{align*}
    r^ks^k+r^kt^k=s^kt^k+1.
\end{align*}
If we set $x=rs$, $y=rt$, $z=st$, then $xyz=(rst)^2$. Therefore, if we can find rational $x,y,z$ such that
\begin{align}
\begin{split} \label{xyz_equation}
    x^k+y^k&=z^k+1,\\
    xyz&=\square,
\end{split}
\end{align}
we will have found a regular $2k$-th triple by setting $r=\sqrt{\frac{xy}{z}}$ etc.
\begin{remark}
    We may homogenize the equations (\ref{xyz_equation}) to get
    \begin{align}
    \begin{split}
        X^k+Y^k&=Z^k+W^k,\\
        XYZW&=\square.
    \end{split}
    \end{align}
\end{remark}
    
Thus we are looking for numbers that can be written as a sum of two $k$-th powers in two (non-trivially) different ways (generalized taxicab numbers) \emph{and} we would require that the product of bases of those powers is a square.

Since it is not known whether a number exists such that it is expressible as a sum of two $5$-th powers in two ways, any progress in this direction will be difficult with current methods except for the cases $k=3,4$.

Thus we restrict ourselves here to a short analysis of \emph{sextic} and \emph{octic} Diophantine triples.

\subsection{Sextic Diophantine Triples}
For sextic Diophantine triples, take $k=3$ in (\ref{xyz_equation}), to get
\begin{align} \label{power-sum cubic} 
\begin{split}
    x^3+y^3&=z^3+1,\\
    xyz&=\square.
\end{split}
\end{align}
The solution set to the first equation was already parametrized by Euler (see \cite{dickson1920history2}, pp. 552-554, cf. \cite{Choudhry},\cite{Elkies_complete_cubic_parametr}).  It has been referred to as the \emph{Fermat cubic surface} at times. 
The general parametrization is too unwieldy to yield a (simple) condition when $xyz$ is a square.

From the second condition, $z=q^2 xy$ for some rational $q$ so one can write the two equations in one as 
\begin{align*}
    x^3+y^3=1+q^6 x^3 y^3.
\end{align*}
Generically, this has genus $4$ over $\mathbb{Q}(q)$ (checked with MAGMA).

To make things more manageable, we tried our luck with simpler methods as described up to now. Unfortunately, this did not work, but we record the work done.

\begin{prop}
    Considering $x^3+y^3=z^3+1$ as an elliptic curve over $\mathbb{Q}(z)$, its long Weierstrass form is given by
    \begin{align*}
        Y^2 -9(z^3+1)Y = X^3-27(z^3+1)^2
    \end{align*}
    which has trivial torsion and a point of infinite order $P =(3(z^2-z+1), 9(z^2-z+1))$ over $\mathbb{Q}(z)$.
\end{prop}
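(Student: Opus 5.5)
The plan is to reduce everything to the classical model of a sum of two cubes. Write $N=z^3+1$, so the curve is $x^3+y^3=N$ over $\mathbb{Q}(z)$. It has the obvious rational points $(x,y)=(z,1)$ and $(1,z)$. I would use the standard birational map
\begin{align*}
X=\frac{3N}{x+y},\qquad Y=\frac{9Nx}{x+y},
\end{align*}
with inverse $x=Y/(3X)$ and $y=1-x$ up to the scaling $x+y=3N/X$. Substituting $x=Y(x+y)/(9N)$ and $x+y=3N/X$ into $x^3+y^3=N$ and clearing denominators should give exactly $Y^2-9NY=X^3-27N^2$; this step is a direct computation. As a cross-check, completing the square shows the model is $j=0$. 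Setting $X'=4X$ and $Y'=8Y-36N$ turns it into $Y'^2=X'^3-432N^2$, which is the familiar model of $x^3+y^3=N$.

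Next I would produce the point. Take $(x,y)=(1,z)$, so $x+y=z+1$ and $N/(z+1)=z^2-z+1$. The map then gives $X=3(z^2-z+1)$ and $Y=9(z^2-z+1)$, which is the point $P$ of the statement. Plugging it back into the long Weierstrass form is a one-line verification.

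For torsion I would work with the short model $Y'^2=X'^3+k$, where $k=-432N^2$ and $N=(z+1)(z^2-z+1)$. A point of order $2$ needs $Y'=0$, which forces $k$ to be a cube in $\mathbb{Q}(z)$. That fails, since $N^2$ has the simple irreducible factors $z+1$ and $z^2-z+1$ appearing only squared, not cubed. For the other orders I would not rely on this kind of factor counting, which is where the main obstacle lies. Instead I would use specialization: torsion of $E(\mathbb{Q}(z))$ injects into $E_{z_0}(\mathbb{Q})$ at any $z_0$ where the fiber is smooth. Taking $z_0=2$ gives $N=9$ and the curve $Y'^2=X'^3-432\cdot 81$. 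The classical description of torsion on Mordell curves says nontrivial torsion occurs only if $k$ is a square, a cube, or $-432$ times a sixth power. None of these holds for $-432\cdot 81$, so $E_2(\mathbb{Q})$ has trivial torsion, and hence so does $E(\mathbb{Q}(z))$. This is the step I expect to need the most care: one must check that the fiber at $z_0=2$ is nonsingular and cite the Mordell-curve torsion classification correctly. One could instead invoke the Gusić–Tadić criterion as in the earlier proposition.

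Finally, $P$ is not the point at infinity, so it is a nontrivial point in a group with trivial torsion, and therefore it has infinite order.
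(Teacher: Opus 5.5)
Your proof is correct. The paper gives no argument for this proposition: the model, the point and the torsion are simply asserted, evidently from a machine computation. For the analogous result on $E_r$ it uses the Gusi\'c--Tadi\'c criterion together with PARI/GP. Your route therefore differs in being self-contained.

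\textbf{Weierstrass model and the point.} Your map $X=3N/(x+y)$, $Y=9Nx/(x+y)$ is the classical sum-of-two-cubes transformation composed with your shift $X'=4X$, $Y'=8Y-36N$. Its inverse is $x=Y/(3X)$, $y=(9N-Y)/(3X)$; you should replace the phrase ``$y=1-x$ up to the scaling'' by this. The identity $Y^3+(9N-Y)^3=27N(Y^2-9NY+27N^2)$ then gives the stated model at once. Exhibiting $P$ as the image of the obvious point $(1,z)$ also explains where $P$ comes from, rather than just checking it lies on the curve.

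\textbf{Torsion.} The step you flagged is sound. Whenever $N(z_0)\neq 0$, the model has good reduction at $z=z_0$. The kernel of reduction is the formal group over $\mathbb{Q}[[z-z_0]]$, which is torsion-free because the residue characteristic is $0$. So specialization is injective on torsion at every such $z_0$. This is a much lighter input than Gusi\'c--Tadi\'c, which is only needed to control the rank, and the rank is not claimed here.

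At $z_0=2$ the fiber $Y'^2=X'^3-2^4\cdot 3^7$ becomes $y^2=x^3-48$ under $(X',Y')\mapsto(X'/9,Y'/27)$. Since $-48$ is sixth-power-free and is neither a square, a cube, nor $-432$, the Mordell-curve classification gives trivial torsion. Hence $E(\mathbb{Q}(z))$ has trivial torsion, and $P\neq\mathcal{O}$ has infinite order, with no computer check required.
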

The point $P$ unfortunately gives $r=\pm 1$ i.e. one of the $a,b,c$ would have to be $0$. Choosing multiples of $P$ did not help. For example, on the curve above, the point $2P$ maps back to
\begin{align*}
    x &= -\frac{2z^3-1}{z^3-1},\\
    y &= \frac{z(z^3+2)}{z^3-1}.
\end{align*}
If $xyz=(rst)^2$ is to indeed be a rational square, then we would necessarily have
\begin{align*}
    xyz= -\frac{z^2(2z^3+1)(z^3+2)}{(z^3-1)^2} = q^2.
\end{align*}
But this already leads to a hyperelliptic equation, which will then only have finitely many rational solutions, if any. 

Another attempt was the following.
\begin{prop}
    Set $x+y=k$. Considering $x^3+y^3=z^3+1$ as an elliptic curve over $\mathbb{Q}(k)$, its Weierstrass form is given by
    \begin{align*}
    Y^2 = X^3 - \frac{27k^3(k^3-4)}{4},
    \end{align*}
    which has trivial torsion and rank (at least) $2$ over $\mathbb{Q}(k)$. Two independent points of infinite order are given by
    \begin{align*}
        P &= (3k^2-3k, \frac{9}{2}k^2(k-2)),\\
        Q &= (3k^2+6k+9, \frac{9}{2}(k^3+4k^2+6k+6)).
    \end{align*}
\end{prop}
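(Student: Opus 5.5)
The plan is a direct change of variables followed by specialization arguments, along the lines of the proof that $E_r$ has rank $0$. First I will derive the Weierstrass form. Substituting $y=k-x$ into $x^3+y^3=z^3+1$ and using $x^3+(k-x)^3=k^3-3k^2x+3kx^2$ gives a curve that is quadratic in $x$ and cubic in $z$. Completing the square with $w=x-\frac{k}{2}$ turns it into
\begin{align*}
    3k\,w^2 = z^3-\frac{k^3-4}{4}.
\end{align*}
Multiplying by $27k^3$ and setting $X=3kz$, $Y=9k^2w$ then gives exactly $Y^2=X^3-\frac{27k^3(k^3-4)}{4}$. The inverse map $z=X/(3k)$, $x=\frac{k}{2}+\frac{Y}{9k^2}$, $y=k-x$ is what we use to pass between the two models.

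Next I will check that $P$ and $Q$ lie on the curve and see where they come from.
\begin{itemize}
\item $P$ pulls back to $z=k-1$, $w=\frac{k-2}{2}$, i.e. $(x,y,z)=(k-1,1,k-1)$, which is the obvious trivial solution.
\item $Q$ pulls back to $z=\frac{k^2+2k+3}{k}$ and $w=\frac{k^3+4k^2+6k+6}{2k^2}$.
\end{itemize}
For $Q$, verifying the identity $\frac{81}{4}(k^3+4k^2+6k+6)^2=27(k^2+2k+3)^3-\frac{27k^3(k^3-4)}{4}$ is a routine polynomial expansion.

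For the torsion I will use the standard facts about curves $Y^2=X^3+D$ with $j=0$, over the field $\mathbb{Q}(k)$, where $D=-\frac{27k^3(k^3-4)}{4}$.
\begin{itemize}
\item A point of order $2$ requires $-D$ to be a cube in $\mathbb{Q}(k)$. This fails because $k^3-4$ is squarefree and not a cube.
\item A point of order $3$ requires $D$ or $-432D$ to be a square. This fails because, up to square constants, both are $\pm k^3(k^3-4)$, which is not a square (odd power of $k$).
\end{itemize}
Other torsion orders on such curves must contain $2$- or $3$-torsion, so the torsion is trivial. Alternatively, one can specialize at a value $k=k_0$ where the curve has good reduction (for example $k_0=3$) and check with PARI/GP that $E_{k_0}(\mathbb{Q})$ has trivial torsion. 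This suffices because specialization is injective on torsion for a nonsingular specialization.

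Finally, for independence of $P$ and $Q$ I will use that specialization $E(\mathbb{Q}(k))\to E_{k_0}(\mathbb{Q})$ is a group homomorphism. If $mP+nQ=\mathcal{O}$ with $(m,n)\neq(0,0)$, then the same relation holds after specializing. So it suffices to find one $k_0$ at which the specialized points are independent, which means their $2\times 2$ Néron–Tate height-pairing determinant is nonzero. I would compute this regulator in PARI/GP at $k_0=3$ (or another small value if that one degenerates).

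This gives rank at least $2$. The only real obstacle is the numerical regulator computation, which is a check rather than an argument. If one wants a purely symbolic proof, the fallback is the Gusić–Tadić criterion \cite{GUSIC2015137} applied at a suitable $k_0$. That route would require exhibiting the full $2$-descent data, but it is not needed for the "at least $2$" claim.
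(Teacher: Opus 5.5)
Your proposal is correct. The paper omits the details of this proposition entirely, so your argument fills that gap, in the same spirit as the paper's specialization proof for $E_r$. Your substitution $w=x-\frac{k}{2}$, $X=3kz$, $Y=9k^2w$ gives exactly the stated model. $P$ is indeed the trivial solution $(k-1,1,k-1)$, and the polynomial identity for $Q$ holds. Specialization handles torsion and independence correctly. One caveat: your sentence that any other torsion ``must contain $2$- or $3$-torsion'' is justified only through your specialization remark. Torsion injects into $E_{k_0}(\mathbb{Q})$ at a nonsingular fibre, and Mordell curves over $\mathbb{Q}$ have torsion inside $\mathbb{Z}/6\mathbb{Z}$. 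Keep that as the actual argument.

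Two refinements follow.

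First, the check at $k_0=3$ works and needs no numerical regulator. Via $(x,y)=(\frac{4}{9}X,\frac{8}{27}Y)$ the fibre becomes $y^2=x^3-368$, which has trivial torsion, and $P,Q$ become $(8,12)$ and $(24,116)$. Modulo $5$ the group is cyclic of order $6$. There $\tilde P=(3,2)$ has order $3$ and $\tilde Q=(4,1)$ has order $6$, so $Q\notin 2E_3(\mathbb{Q})$ and $P+Q\notin 2E_3(\mathbb{Q})$. Also $P\notin 2E_3(\mathbb{Q})$, since $x(2R)=8$ would give a rational root of $x^4-32x^3+2944x+11776$, and this polynomial has none. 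Because $E_3(\mathbb{Q})$ is torsion-free, a primitive relation $mP+nQ=\mathcal{O}$ would put one of $P,Q,P+Q$ in $2E_3(\mathbb{Q})$. Hence $P,Q$ are independent there, and therefore over $\mathbb{Q}(k)$ by your homomorphism argument.

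Second, drop the Gusi\'c--Tadi\'c fallback. That criterion requires a nontrivial $\mathbb{Q}(k)$-rational $2$-torsion point, which you have just shown this curve lacks. In any case it only matters for an upper bound on the rank, which the statement does not claim.
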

We omit the details, but again, no combination of points of small order gave conditions that would be amenable to producing rational solutions. In fact, all resulting equations seemed to be of higher genus, thus if there were any rational solutions, they should be very rare. 

Still, there are solutions to (\ref{power-sum cubic}). By brute-force, we found the following identities
\begin{align*}
    243^3+1600^3 &= 484^3+1587^3, \\
    78^3 + 2809^3 &= 289^3 + 2808^3.
\end{align*}
The corresponding products are, respectively, squares. These identities can be reordered and then dehomogenized in different ways leading to different triples (with different signs). The triples with all positive elements that one obtains this way are
\begin{align*}
(a,b,c) &= \left(
\frac{28041978419}{287496000}, 
\frac{32882791256000}{318751954191},
\frac{131810257007}{1915864488000}
\right),\\
    (a,b,c) &= \left(
\frac{116256402521}{15260373670464},
\frac{3299834241680237}{503598378816},
\frac{4782288288960}{731432701}
\right).
\end{align*}
The identities above are particularly notable, since we can also write them as
\begin{align} \label{sum_of_two_sectic_forms}
\begin{split}
    40^6 + 3^3\cdot 9^6 &= 22^6+3^3\cdot 23^6, \\
    53^6 + 78^3\cdot 1^6 &= 17^6 + 78^3\cdot 6^6.
\end{split}
\end{align}
This shows a decomposition of a number into the form $x^6+h^3y^6$ in two ways. No number is known that is decomposable in two ways as $x^6+y^6$.

The solutions above were found in multiple ways. A search with the explicit parametrization yielded the first example, but not the second one. Both examples were also found simply by brute-force search. No further examples have been found with a search in the range $0 \leq X,Y,Z,W \leq 30000$.

Going back to (\ref{power-sum cubic}), we note that in our two solutions we can choose, e.g. $x=\square$ (in the two examples above $1600/484$ and $2809/289$ are squares). Thus, assume $x=\chi^2$ and $z=k^2y$ for some rational $k$. Then we find that
\begin{align*}
    y^3 = \frac{k^6-1}{\chi^6-1},
\end{align*}
showing another possible way to view the equation. This also shows that whenever one of $x,y,z$ is a square, we will have an identity as in (\ref{sum_of_two_sectic_forms}).
Finally we note that a few dozen examples of sextic Diophantine triples that are not regular have been found by brute force search.


\subsection{Octic Diophantine Triples}
To find octic regular diophantine triples, we would need to solve the following in rationals,
\begin{align} \label{power-sum quartic} 
    X^4+Y^4 &= Z^4+W^4,\\
    XYZW &= \square.
\end{align}
We found no examples. Trying out Euler's parametric family (\cite{dickson1920history2}, pp. 644-647 for the classical derivation),
\begin{align*}
X &= a^7 + a^5b^2 - 2a^3b^4 + 3a^2b^5 + ab^6, \\
Y &= a^6b - 3a^5b^2 - 2a^4b^3 + a^2b^5 + b^7, \\
Z &= a^7 + a^5b^2 - 2a^3b^4 - 3a^2b^5 + ab^6, \\
W &= a^6b + 3a^5b^2 - 2a^4b^3 + a^2b^5 + b^7.
\end{align*}
The condition $XYZW = \square$ is homogeneous and so we can take $b=1$. Then
\begin{align*}
    XZ &= (a^{12} + 2a^{10} - 3a^8 - 2a^6 + 6a^4 - 13a^2 + 1)a^2, \\
    YW &= a^{12} - 13a^{10} + 6a^8 - 2a^6 - 3a^4 + 2a^2 + 1.
\end{align*}
Notice that the above polynomials in $a$ are reciprocal. If we set $YW=f(a)$, then $XZ=a^2f(1/a)$. Thus, $XYZW$ can be written in terms of $u = a+1/a$ and we get the polynomial
\begin{align*}
    XYZW = v^2 = u^{12} - 23u^{10} + 141u^8 - 266u^6 - 80u^4 + 351u^2 + 324.
\end{align*}
We may further set $u=t^2$ and arrive at
\begin{align*}
    v^2 = t^6 - 23t^5 + 141t^4 - 266t^3 - 80t^2 + 351t + 324.
\end{align*}
In any case, this is a genus $2$ curve and has only finitely many points, if any. Therefore, this is not the right approach to generate many examples.
Further parametric families can be found in a recent paper of Bremner \cite{Bremner_4th_powers} among others, but due to their complexity, they will probably not be of use here.

\section{Further remarks}
Recently, a question on MathOverflow  \cite{mathoverflow_parametric_solutions} asked a very related question. Most of the parametric families mentioned there were already known to the author. The only one not already appearing here is due to Mathoverflow user "GH from MO" (going again back to Euler and Hillyer, see \cite{dickson1920history2}, p. 174) which leads to the family of regular triples with 
\begin{align*}
t &= \frac{(u-1)(3u+1)}{2(3u^2+1)}, \\
s &= \frac{(u+1)(3u-1)}{4u}, \\
r &= \frac{9u^4 + 22u^2 + 1}{2(9u^4 + 6u^3 + 2u^2 - 2u + 1)},
\end{align*}
and as before, $a=s^2-r^2$, $b=t^2-r^2$, $c=s^2+t^2$. We omit the explicit expressions for $a,b,c$, but we note that $c$ is a perfect square in $\mathbb{Q}(u)$. This suggests a similar approach to our first parametric family, except that one would ask that \[s^2+t^2\] is a square. With this condition, (\ref{reg_quartic_eqn}) would transform to the Weierstrass model,
\begin{align*}
    y^2 = x^3-u^2(u^2-1)^2x,
\end{align*}
which is $2$-isogenous to the curve (\ref{second_parametrization}) of our second parametric solution (even though we started with a condition that looks like the condition of the first parametric solution). We simply note that the parametric family given above comes from a particular fibration when $u = \frac{3v^2+v}{v-1}$ when the above curve has an extra point of infinite order that maps to the $s,t$ above. We can further find some other parametric families from this curve, but we will stop here.

\section{Conclusion}
In this paper we demonstrated many non-trivial parametric families of quartic Diophantine triples by assuming the regularity condition in Definition \ref{def_regularity} and further conditions on the right hand side variables $r,s,t$. We proved that there are infinitely many quartic Diophantine triples with positive elements. 

What about higher powers? We explored the sextic and octic cases, highlighting the difficulties in extending these methods to higher exponents. Are there infinitely many sextic Diophantine triples with positive elements? What about the octic case? Is there an octic Diophantine triple that does not come from (\ref{BST_parametrization})? Is there a positive one? Are there infinitely many?

We hope that these infinite families help to better understand the structure of quartic Diophantine triples and facilitate the search for further interesting examples of higher order Diophantine $m$-tuples.

\subsection*{Acknowledgments}
The author would like to thank his mentor Matija Kazalicki for many suggestions and discussions regarding the problem. The author also thanks Andrej Dujella for always being ready to answer any question regarding Diophantine $m$-tuples and number theory in general.

\subsection*{Funding}
The author was supported by the Croatian Science Foundation,
project IP-2022-10-500 (TEBAG) as well as by the institutional projects “BELLGI – Bellman functions, graphs and computability” (100‑038/26) and “TopoPha – Topological Phases” (100‑021/26) of the University of Zagreb Faculty of Civil Engineering, funded by the European Union – NextGenerationEU.

\printbibliography

\end{document}